\newcommand\mut[1]{\ignorespaces}
\newtheorem{theorem}{Theorem}[section]
\newtheorem{lemma}[theorem]{Lemma}
\newtheorem{proposition}[theorem]{Proposition}
\newtheorem{remark}[theorem]{Remark}
\title{On the Geil-Matsumoto Bound and the Length of AG Codes} 
\author{Maria Bras-Amor\'os, Albert Vico-Oton}
\date{Designs, Codes and Cryptography, Springer, \\vol. 70, n. 1-2, pp. 117-125, January 2014.}
\begin{document}
\maketitle

\begin{abstract}
The Geil-Matsumoto bound conditions the number of rational places of a function field in terms of the Weierstrass semigroup of any of the places. Lewittes' bound  preceded the Geil-Matsumoto bound and it only considers the smallest generator of the numerical semigroup. It can be derived from the Geil-Matsumoto bound and so it is weaker. However, for general semigroups the Geil-Matsumoto bound does not have a closed formula and it may be hard to compute, while Lewittes' bound is very simple. We give a closed formula for the Geil-Matsumoto bound for the case when the Weierstrass semigroup has two generators. We first find a solution to the membership problem for semigroups generated by two integers and then apply it to find the above formula. We also study the semigroups for which Lewittes's bound and the Geil-Matsumoto bound coincide. We finally investigate on some simplifications for the computation of the Geil-Matsumoto bound.
\end{abstract}

\noindent
{\bf Key words:}
Algebraic function field, Weierstrass semigroup, Geil-Matsumoto bound, gonality bound, Lewittes' bound.

\section{Introduction}
\mut{
Given $n$ pairwise distinct elements $\alpha_1,\dots,\alpha_n$ of a finite field ${\mathbb F}_q$,
the Reed-Solomon code $RS_{\alpha_1,\dots,\alpha_n}(k)$ is defined by
$\{(f(\alpha_1),\dots,f(\alpha_n)): f\in {\mathbb F}_q[x], \deg(f)<k\}.$
Then, the length of $RS_{\alpha_1,\dots,\alpha_n}(k)$ is $n$ and it is bounded by the field size $q$.

A generalization of this is that of Reed-Muller codes. That is,
given $n$ pairwise distinct affine points $P_1,\dots,P_n$ of the affine space ${\mathbb F}_q^m$,
the Reed-Muller code $RM_{P_1,\dots,P_n}(k)$ is defined by
$\{(f(P_1),\dots,f(P_n)): f\in {\mathbb F}_q[x_1,\dots,x_m], \deg(f)<k\}.$
Then, the length of $RM_{P_1,\dots,P_n}(k)$ is $n$ and it is bounded by the size of ${\mathbb F}_q^m$, that is, $q^m$.

Algebraic geometry codes generalize this giving codes attaining very important asymptotic bounds.}

Given $n$ pairwise distinct places $P_1,\dots,P_n$ of degree one of 
an algebraic function field $F/{\mathbb F}_q$,
and a divisor $G$ with support disjoint from $\{P_1,\dots,P_n\}$,
the geometric Goppa code $C_{P_1,\dots,P_n}(G)$ is defined by
$\{(f(P_1),\dots,f(P_n)): f\in L(G)\}.$
See \cite{6:Stichtenoth} for a general reference.
Then, the length of $C_{P_1,\dots,P_n}(G)$ is $n$ and it is bounded by the 
number of places of degree one of $F/{\mathbb F}_q$.
Thus, an important problem of algebraic coding theory is bounding the number of places of degree one of function fields.

The Hasse-Weil bound for the number of places of degree one of  a function field as well as Serre's improvement use only the genus of the function field and the field size.
Geil and Matsumoto give in \cite{6:GeilMatsumoto} a bound 
in terms of the Weierstrass semigroup 
of a rational place (i.e. the set of pole orders of rational functions having only poles in that place). It is a neat formula although it is not closed and it may be computationally hard to calculate. 
Lewittes' bound \cite{6:Lewittes}, also called the gonality bound, preceded the Geil-Matsumoto bound and it
only considers the smallest generator of the numerical semigroup. It can
be derived from the Geil-Matsumoto bound and so it is weaker. 
The advantage of Lewittes' bound with respect to the Geil-Matsumoto bound is that Lewittes' bound is very simple to compute. 

Important curves such as hyperelliptic curves, Hermitian curves or Geil's norm-trace curves \cite{6:Geil:normtrace} have Weierstrass semigroups generated by two integers.
Also,  
for any numerical semigroup $\Lambda$ generated by two coprime integers, one can get the equation of a curve having a place whose Weierstrass semigroup is $\Lambda$ \cite{6:HoLiPe}.

In Section~\ref{6:sec:membership}, we give some notions on numerical semigroups
and solve the membership problem for numerical semigroups generated by two coprime integers.  Then in Section~\ref{6:sec:gm2} we use the result in Section~\ref{6:sec:membership} to deduce a closed formula for the Geil-Matsumoto bound when the Weierstrass semigroup is generated by two integers.
In Section~\ref{6:sec:L-GM} we return to semigroups generated by any number of integers and study
in which cases Lewittes' bound and the Geil-Matsumoto
bound coincide. In Section~\ref{6:sec:simplify} 
we give a result that may simplify the computation
of the Geil-Matsumoto bound.

\section{Membership for semigroups with two generators}
\label{6:sec:membership}
Let ${\mathbb N}_0$ be the set of non-negative integers.
A {\it numerical semigroup} is a subset of ${\mathbb N}_0$ containing $0$, closed under addition and with finite complement in ${\mathbb N}_0$. A general reference for numerical semigroups is \cite{6:RosalesGarciaSanchez}.
For a numerical semigroup $\Lambda$
define
the {\it genus} of $\Lambda$ as the number
$g=\#({\mathbb N}_0\setminus\Lambda)$.
The elements in $\Lambda$ are called the {\it non-gaps} of $\Lambda$
while the elements in
${\mathbb N}_0\setminus\Lambda$ are called the {\it gaps} of $\Lambda$.

The {\it generators} of a numerical semigroup are those non-gaps which can not
be obtained as a sum of two smaller non-gaps.
If $a_1,\dots,a_l$ 
are the generators of a semigroup $\Lambda$ then 
$\Lambda=\{n_1a_1+\dots+n_la_l:n_1,\dots,n_l\in{\mathbb N}_0\}$ and so 
$a_1,\dots,a_l$ are necessarily coprime.
If $a_1,\dots,a_l$ are coprime, we call $\{n_1a_1+\dots+n_la_l:n_1,\dots,n_l\in{\mathbb N}_0\}$
the {\it semigroup generated} by $a_1,\dots,a_l$ and
denote it by $\langle a_1,\dots,a_l\rangle$.

Among numerical semigroups, those generated by two integers, that is, numerical semigroups of the form $\{ma+nb:m,n\in{\mathbb N}_0\}$ for some coprime integers $a,b$, have a particular interest. Important curves such as hyperelliptic curves, Hermitian curves or Geil's norm-trace curves \cite{6:Geil:normtrace} have Weierstrass semigroups generated by two integers.
Properties of semigroups generated by two coprime integers can be found in \cite{6:KiPe}. 
For instance, the semigroup generated by $a$ and $b$ has genus $\frac{(a-1)(b-1)}{2}$, and any element $i\in\Lambda$ can be uniquely written as $i=ma+nb$ with $m,n$ integers such that $0\leqslant n \leqslant a-1$.
From the results in \cite[Section 3.2]{6:HoLiPe} one can get, for any numerical semigroup $\Lambda$ generated by two coprime integers the equation of a curve having a point whose Weierstrass semigroup is $\Lambda$.

For a numerical semigroup, the membership problem is that of determining, for any integer $i$ whether it belongs or not to the numerical semigroup. 
In the next lemma we first state a result already proved in \cite{6:KiPe} and then we give a solution to the membership problem for semigroups generated by two coprime integers. By $x \mod a$ with $x$, $a$ integers we mean the smallest positive integer congruent with $x$ modulo $a$.   

\begin{lemma}
\label{6:lemma:membership}
Suppose $\Lambda$ is generated by $a,b$ with $a<b$.
Let $c$ be the inverse of $b$ modulo $a$.
\begin{enumerate}
\item Any $i\in\Lambda$ can be uniquely written as 
$i=ma+nb$ for some $m,n\geqslant 0$ with $n\leqslant a-1$.
\item $i\in\Lambda$ if and only if $b(i\cdot c\mod a)\leqslant i.$


\end{enumerate}
\end{lemma}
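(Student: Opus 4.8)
The plan is to establish part~1 by the standard ``reduce the $b$-coefficient modulo $a$'' argument and then read off part~2 almost immediately from it. For part~1, start from an arbitrary expression $i=m'a+n'b$ with $m',n'\geqslant 0$ (which exists because $i\in\Lambda$), divide $n'$ by $a$ to write $n'=qa+n$ with $0\leqslant n\leqslant a-1$ and $q\geqslant 0$, and substitute to obtain $i=(m'+qb)a+nb$; since $m'+qb\geqslant 0$, this proves existence. For uniqueness, if $ma+nb=\bar m a+\bar n b$ with both $b$-coefficients in $\{0,\dots,a-1\}$, then $a\mid(\bar n-n)b$, and $\gcd(a,b)=1$ forces $a\mid \bar n-n$; as $|\bar n-n|\leqslant a-1$ we get $\bar n=n$ and hence $\bar m=m$. (Since the statement attributes part~1 to \cite{6:KiPe}, one could alternatively just cite it and skip this.)

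The engine for part~2 is the observation that in the canonical representation $i=ma+nb$ of part~1 the coefficient $n$ is exactly the residue of $i\cdot c$ modulo $a$. Indeed, reducing $i=ma+nb$ modulo $a$ gives $i\equiv nb\pmod a$, and multiplying by $c$ and using $bc\equiv 1\pmod a$ yields $i\cdot c\equiv n\pmod a$; since $0\leqslant n\leqslant a-1$, this identifies $n=(i\cdot c\bmod a)$.

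Granting this, the forward direction is trivial: if $i\in\Lambda$, write $i=ma+nb$ as above with $m\geqslant 0$, so that $b(i\cdot c\bmod a)=nb\leqslant ma+nb=i$. For the converse, suppose $b(i\cdot c\bmod a)\leqslant i$ and set $n=(i\cdot c\bmod a)$ with $0\leqslant n\leqslant a-1$. From $n\equiv i\cdot c\pmod a$ and $bc\equiv 1\pmod a$ we get $nb\equiv i\pmod a$, so $i-nb$ is an integer multiple of $a$, say $i-nb=ma$; the hypothesis $nb\leqslant i$ forces $m\geqslant 0$, and therefore $i=ma+nb\in\Lambda$.

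I do not expect a genuine obstacle here: the whole argument is elementary once one spots that $i\cdot c\bmod a$ recovers the $b$-coefficient of the normal form. The only points needing a little care are the normalization of the residue (so that the range of $n$ in part~1 matches the value returned by the $\bmod$ operation in part~2, in particular in the boundary case $a\mid i$, where $n=0$) and the routine non-negativity bookkeeping in the substitution step of part~1.
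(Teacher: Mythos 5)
Your proposal is correct and follows essentially the same route as the paper: normalize the $b$-coefficient modulo $a$ for part~1, observe that $i\cdot c\bmod a$ recovers that coefficient, and use this for both directions of part~2. No meaningful differences.
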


\begin{proof}
\begin{enumerate}
\item
Suppose $i\in\Lambda$. Then 
$i=\tilde ma+\tilde nb$ for some non-negative integers
$\tilde m, \tilde n$.
Let $n=\tilde n \mod a$ and $m=\tilde m+b\lfloor\frac{\tilde n}{a}\rfloor$.
Then 
$i=\tilde ma+\tilde nb=\tilde ma+(a\lfloor\frac{\tilde n}{a}\rfloor+(\tilde n\mod a))b=ma+nb$ with obviously $m,n\geqslant 0$ and $n\leqslant a-1$.

For uniqueness, suppose $i=ma+nb$ for some $m,n\geqslant 0$ and $n\leqslant a-1$, and simultaneously, $i=m'a+n'b$ for some $m',n'\geqslant 0$ and $n'\leqslant a-1$. 
Then $(m-m')a=(n'-n)b$. Since $a$ and $b$ are coprime, $a$ must divide $n-n'$ which can only happen if $n=n'$ and so $m=m'$.
\item
If $i\in\Lambda$ then by the previous statement there exist unique integers $m,n\geqslant 0$ with $n\leqslant a-1$ such that $i=ma+nb$.
In this case, $i\cdot c\mod a = (ma+nb)\cdot c\mod a=n$ and then it is obvious that $b(i\cdot c\mod a)\leqslant i$.

On the other hand, suppose $i\in{\mathbb N}_0$ and define $n=i\cdot c\mod a$.
Then $i-nb$ is a multiple of $a$
since $(i-nb) \mod a =  ((i\mod a) - (nb \mod a))\mod a=0$.
If $nb\leqslant i$ then $i-nb$ is a positive multiple of $a$, say $ma$,
and $i=ma+nb$, so $i\in\Lambda$.

\end{enumerate}
\end{proof}

\begin{remark}
Notice that for the case $b=a+1$
the condition $b(i\cdot c\mod a)\leqslant i$
is equivalent to $(a+1)(i\mod a)\leqslant i$ and to
$a(i\mod a)\leqslant i-(i\mod a)$ and so $i\mod a\leqslant \lfloor\frac{i}{a}\rfloor$.
Therefore, $i\in\langle a, a+1\rangle$ if and only if the remainder of the division of $i$ by $a$ is at most its quotient. This was already proved in \cite{6:GarciaRosales:intervals}.
\end{remark}

\section{The Geil-Matsumoto bound}
\label{6:sec:gm2}

Let $N_q(g)$ be the maximum number of rational places of degree one of a function field over ${\mathbb F}_q$ with genus $g$. 
The Hasse-Weil bound \cite[Theorem V.2.3]{6:Stichtenoth} states $|N_q(g)-(q+1)|\leqslant 2 g \sqrt{q}$. Serre's refinement \cite[Theorem V.3.1]{6:Stichtenoth} 
states
$|N_q(g)-(q+1)|\leqslant g \lfloor 2\sqrt{q}\rfloor$. This means that either 
$N_q(g)\leqslant q+1$ 
or 
\begin{equation}
\label{eq:serre}
N_q(g)\leqslant S_q(g):=q+1+g \lfloor 2\sqrt{q}\rfloor.
\end{equation}
Since $g \lfloor 2\sqrt{q}\rfloor\geqslant 0$ it is enough to state
equation~\ref{eq:serre}.

If we consider the Weierstrass semigroup $\Lambda$ of any such places then we can define $N_q(\Lambda)$ as the maximum number of rational places of degree one of a function field over ${\mathbb F}_q$ such that the Weierstrass semigroup at one of the places is $\Lambda$. 
Lewittes' bound \cite{6:Lewittes} states, if $\lambda_1$ is the first non-zero element in $\Lambda$, 
$$N_q(\Lambda)\leqslant L_q(\Lambda):=q\lambda_1+1$$ and the Geil-Matsumoto bound \cite{6:GeilMatsumoto} is
\begin{equation}\label{6:gm}N_q(\Lambda)\leqslant GM_q(\Lambda):=\#(\Lambda\setminus\cup_{\lambda_i\mbox{ generator of }\Lambda}(q\lambda_i+\Lambda))+1.\end{equation}
In \cite{6:GeilMatsumoto,6:HoLiPe} the next result is proved, from which Lewittes' bound can be deduced from the Geil-Matsumoto bound.

\begin{lemma}
\label{6:l:hlpgm}
$\#(\Lambda\setminus(q\lambda_1+\Lambda))=q\lambda_1$.
\end{lemma}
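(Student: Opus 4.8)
The plan is to count directly the elements of $\Lambda$ that lie in $q\lambda_1 + \Lambda$, and show there are exactly $g$ of the first $q\lambda_1 + g$ many elements missing, or rather, to argue that $\Lambda \setminus (q\lambda_1+\Lambda)$ consists precisely of the non-gaps of $\Lambda$ that are strictly smaller than $q\lambda_1$, together with an adjustment accounting for the gaps. More concretely, observe that $q\lambda_1 + \Lambda \subseteq \Lambda$ since $\Lambda$ is closed under addition, so $\Lambda \setminus (q\lambda_1 + \Lambda)$ makes sense as the set of non-gaps $i \in \Lambda$ with $i - q\lambda_1 \notin \Lambda$. The key observation is that for $i \geqslant q\lambda_1$, writing $i = q\lambda_1 + j$, we have $i \in q\lambda_1 + \Lambda$ iff $j \in \Lambda$; so among integers $\geqslant q\lambda_1$, the elements of $\Lambda \setminus (q\lambda_1 + \Lambda)$ are exactly those $i = q\lambda_1 + j$ with $i \in \Lambda$ but $j \notin \Lambda$, i.e. $j$ a gap of $\Lambda$.

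First I would split $\Lambda$ into the part below $q\lambda_1$ and the part at or above $q\lambda_1$. Every non-gap $i < q\lambda_1$ automatically lies in $\Lambda \setminus (q\lambda_1+\Lambda)$, because $i - q\lambda_1 < 0$ cannot be in $\Lambda$. Then for the part $i \geqslant q\lambda_1$, the map $j \mapsto q\lambda_1 + j$ sets up a bijection between the gaps $j$ of $\Lambda$ and the elements of $\Lambda \setminus (q\lambda_1 + \Lambda)$ that are $\geqslant q\lambda_1$; one must check that $q\lambda_1 + j \in \Lambda$ for every gap $j$, which holds because all integers $\geqslant q\lambda_1$ are non-gaps (indeed the Frobenius number of $\Lambda$ is less than $\lambda_1 \cdot (\text{largest generator}) \leqslant$ something well below $q\lambda_1$ for $q \geqslant 2$, or more simply $q\lambda_1 > 2g-1 \geqslant$ Frobenius number, using $q \geqslant 2$ and $\lambda_1 \geqslant 2$... actually one should be careful here). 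Adding up: $\#(\Lambda \setminus (q\lambda_1 + \Lambda)) = \#\{i \in \Lambda : i < q\lambda_1\} + \#\{\text{gaps of }\Lambda\} = (q\lambda_1 - g) + g = q\lambda_1$, where $q\lambda_1 - g$ counts the non-gaps below $q\lambda_1$ since there are $g$ gaps and all of them are $< q\lambda_1$.

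The main obstacle I expect is justifying cleanly that $q\lambda_1$ exceeds the Frobenius number of $\Lambda$ (equivalently, that every integer $\geqslant q\lambda_1$ is a non-gap and, in particular, that every gap is $< q\lambda_1$), since this is what makes both the "non-gaps below $q\lambda_1$" count equal to $q\lambda_1 - g$ and the bijection with gaps work out. This should follow from a standard bound on the Frobenius number — for instance the Frobenius number is at most $2g - 1$, and one needs $2g - 1 < q\lambda_1$; since $g \leqslant \binom{\lambda_1}{2} \cdot(\cdots)$ this may require the hypothesis $q \geqslant 2$ and perhaps a short separate argument, or one can invoke that the largest gap is less than $\lambda_1$ times the conductor-type quantity. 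Alternatively, and perhaps more in the spirit of the paper, one sidesteps Frobenius-number estimates entirely: partition $\mathbb{N}_0$ into residue classes mod $\lambda_1$, note that in each class the non-gaps form an arithmetic progression (a "staircase"), and in each class exactly $q$ of the elements of $\Lambda$ fail to lie in $q\lambda_1 + \Lambda$ — namely the $q$ smallest non-gaps in that class — giving $q \cdot \lambda_1$ in total. I would lean toward this residue-class argument as the cleanest route, and expect the bookkeeping there to be the only real work.
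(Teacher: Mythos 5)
Your main argument, as written, has a genuine gap: it rests on the claim that every gap of $\Lambda$ is smaller than $q\lambda_1$, and this is false in general. Take $\Lambda=\langle 2,101\rangle$ and $q=2$, so that $q\lambda_1=4$: the gaps are the odd numbers $1,3,\dots,99$, almost all of which exceed $4$. Both halves of your count then fail: the non-gaps below $q\lambda_1$ are $\{0,2\}$, which is not $q\lambda_1-g$, and the map $j\mapsto q\lambda_1+j$ does not send every gap into $\Lambda$ (e.g.\ $4+93=97\notin\Lambda$). No Frobenius-number estimate can rescue this, since the Frobenius number of $\langle a,b\rangle$ is $ab-a-b$, which is unbounded for fixed $\lambda_1=a$ and fixed $q$. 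The correct repair is to notice that the two errors cancel: because $q\lambda_1\in\Lambda$, the map $j'\mapsto j'-q\lambda_1$ is a bijection from the set of gaps $j'\geqslant q\lambda_1$ onto the set of gaps $j$ with $q\lambda_1+j\notin\Lambda$, whence
$\#\{i\in\Lambda: i<q\lambda_1\}+\#\{j\ \text{gap}: q\lambda_1+j\in\Lambda\}
=\bigl(q\lambda_1-\#\{\text{gaps}<q\lambda_1\}\bigr)+\bigl(g-\#\{\text{gaps}\geqslant q\lambda_1\}\bigr)=q\lambda_1$.
But that cancellation is precisely the step your write-up is missing.

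The residue-class argument you sketch at the end, and say you would lean toward, is correct and needs no such patch: for each residue $r$ modulo $\lambda_1$, the elements of $\Lambda$ in that class are exactly $m_r+\lambda_1{\mathbb N}_0$, where $m_r$ is the smallest element of $\Lambda$ congruent to $r$ (this uses only $\lambda_1\in\Lambda$ and closure under addition); since $q\lambda_1\equiv 0$, the elements of $q\lambda_1+\Lambda$ in that class are $m_r+q\lambda_1+\lambda_1{\mathbb N}_0$, so exactly $q$ elements per class survive in $\Lambda\setminus(q\lambda_1+\Lambda)$, giving $q\lambda_1$ in total. Commit to that version. For what it is worth, the paper does not prove this lemma at all but cites it from Geil--Matsumoto and H{\o}holdt--van Lint--Pellikaan, where the argument is of this Ap\'ery-set type, so your preferred route is the standard one.
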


Here, for a numerical semigroup generated by two coprime integers $a,b$ we describe the Geil-Matsumoto bound in terms of $a, b$ giving a formula which is simpler to compute than (\ref{6:gm}).

\enlargethispage{2cm}
\begin{theorem}
The Geil-Matsumoto bound 
for the semigroup generated by $a$ and $b$ with $a<b$ is
\begin{eqnarray}
\label{6:gm0}
GM_q(\langle a,b\rangle)&=&1+\sum_{n=0}^{a-1}\min\left(q,\left\lceil\frac{q-n}{a}\right\rceil\cdot b\right)
\\
\label{6:gm1}
&=&
\left\{\begin{array}{ll}
1+qa &\mbox{ if }q\leqslant \lfloor\frac{q}{a}\rfloor b\\
1+(q\,\text{\rm mod}\,a)q+(a-(q\,\text{\rm mod}\,a))\lfloor\frac{q}{a}\rfloor b &\mbox{ if } 
\lfloor\frac{q}{a}\rfloor b
<q\leqslant\lceil\frac{q}{a}\rceil b\\
1+ab\lceil\frac{q}{a}\rceil -(a-(q\,\text{\rm mod}\,a))b &\mbox{ if } q>\lceil\frac{q}{a}\rceil b\\
\end{array}
\right.
\end{eqnarray}
\end{theorem}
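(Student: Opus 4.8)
The plan is to compute $GM_q(\langle a,b\rangle) = \#(\Lambda\setminus\bigcup_{i}(q\lambda_i+\Lambda)) + 1$ directly, using Lemma~\ref{6:lemma:membership} to handle membership in $\Lambda$. Since $\Lambda = \langle a,b\rangle$ is generated by exactly $a$ and $b$, the union in (\ref{6:gm}) is $(qa+\Lambda)\cup(qb+\Lambda)$. First I would organize $\Lambda$ by residue class modulo $a$: by part~1 of Lemma~\ref{6:lemma:membership}, for each $n$ with $0\leqslant n\leqslant a-1$ the elements of $\Lambda$ congruent to $nb \pmod a$ are exactly $\{ma+nb : m\geqslant 0\}$, an arithmetic-progression ``column'' with smallest element $nb$. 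The key observation is that adding $qa$ or $qb$ to an element of $\Lambda$ keeps it in $\Lambda$, and I want to count, within each column $n$, how many elements survive removal of $(qa+\Lambda)$ and $(qb+\Lambda)$.

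Next I would analyze column $n$ carefully. The set $qa + \Lambda$ intersected with column $n$ consists of the elements $ma+nb$ with $m\geqslant q$ (since $qa$ shifts within the same residue class), so removing $qa+\Lambda$ leaves the $q$ elements $ma+nb$ for $0\leqslant m\leqslant q-1$. Now $qb + \Lambda$ lands in residue class $qb + nb \equiv (n+q)b \pmod a$, i.e.\ column $(n+q)\bmod a$, so it only interacts with one column; equivalently, the elements removed from column $n$ by $qb+\Lambda$ are those of the form $qb + (m'a + n'b)$ where $n' \equiv n - q \pmod a$, $0\leqslant n'\leqslant a-1$. The smallest such element is $qb + n'b = (q+n')b$, and these form a sub-progression $\{(q+n')b + m'a : m'\geqslant 0\}$ of column $n$. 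So within column $n$, the surviving elements are those $ma+nb$ with $0\leqslant m\leqslant q-1$ AND $ma+nb < (q+n')b$, where $n' = (n-q)\bmod a$. Counting the number of valid $m$ gives $\min\!\left(q,\ \left\lceil\frac{(q+n')b - nb}{a}\right\rceil\right) = \min\!\left(q,\ \left\lceil\frac{q-\,(q-n')\,}{a}\right\rceil\cdot\text{something}\right)$; re-indexing the sum over $n$ by the shifted residue should collapse this to $\min\!\left(q, \left\lceil\frac{q-n}{a}\right\rceil\cdot b\right)$ after reparametrization. Summing over all $a$ columns and adding $1$ yields (\ref{6:gm0}).

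For the passage from (\ref{6:gm0}) to the three-case closed form (\ref{6:gm1}), I would determine for which $n\in\{0,\dots,a-1\}$ the minimum is achieved by the first term $q$ versus the second term $\lceil(q-n)/a\rceil\cdot b$. Since $\lceil(q-n)/a\rceil$ takes the value $\lceil q/a\rceil$ for small $n$ and $\lfloor q/a\rfloor$ for larger $n$ — the threshold being $n = q\bmod a$ — the comparison $q \lessgtr \lceil(q-n)/a\rceil\, b$ splits into the regimes $q\leqslant \lfloor q/a\rfloor b$, $\lfloor q/a\rfloor b < q\leqslant \lceil q/a\rceil b$, and $q > \lceil q/a\rceil b$. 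In the first regime every term equals $q$ (giving $1+qa$); in the last every term equals $\lceil(q-n)/a\rceil b$ (and $\sum_{n=0}^{a-1}\lceil(q-n)/a\rceil = a\lceil q/a\rceil - (a - (q\bmod a))$, giving the third line); in the middle regime the terms with $n < q\bmod a$ contribute $q$ and the rest contribute $\lfloor q/a\rfloor b$, giving the second line. The main obstacle I anticipate is bookkeeping: getting the residue-class re-indexing exactly right so that the ceiling expression comes out as $\lceil(q-n)/a\rceil$ with $n$ running cleanly from $0$ to $a-1$, and handling the boundary cases of the $\min$ (equality cases, and whether the relevant inequalities are strict) consistently with the floor/ceiling conventions in the statement.
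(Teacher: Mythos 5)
Your plan is correct and follows essentially the same route as the paper: both decompose $\Lambda$ according to the coefficient $n$ in the unique representation $i=ma+nb$, find that the surviving $m$ in the class of $n$ are exactly $0\leqslant m<\min\left(q,\left\lceil\frac{q-n}{a}\right\rceil b\right)$, and then split the sum over $n$ according to whether $\left\lceil\frac{q-n}{a}\right\rceil$ equals $\lceil\frac{q}{a}\rceil$ (for $n<q\bmod a$) or $\lfloor\frac{q}{a}\rfloor$. The reindexing you flag as the main obstacle is in fact unnecessary: with $n'=(n-q)\bmod a$ one has $q+n'-n=a\left\lceil\frac{q-n}{a}\right\rceil$ exactly, so the cutoff in column $n$ is already $\left\lceil\frac{q-n}{a}\right\rceil b$ with no ceiling subtlety and no shift of index.
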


\begin{proof}
The Geil-Matsumoto bound 
for the semigroup generated by $a$ and $b$ with $a<b$ is
$1+\#\left\{i\in\Lambda:\begin{array}{l}
i-qa\not\in\Lambda\\
i-qb\not\in\Lambda
\end{array}
\right\}.$
By Lemma~\ref{6:lemma:membership} $i\in \Lambda$ if and only if 
$b(i c\mod a)\leqslant i$, where $c$ is the inverse of $b$ modulo $a$.
Now, suppose that $i\in\Lambda$ can be expressed as $i=ma+nb$ 
for some integers 
$m,n\geqslant 0$, $n\leqslant a-1$.
Then 
\begin{eqnarray*}
i-qa\not\in\Lambda &\Longleftrightarrow&
b((i-qa) c\mod a)> i-qa
\\&\Longleftrightarrow&
b((ma+nb-qa) c\mod a)> i-qa
\\&\Longleftrightarrow& 
b(n b c\mod a)> i-qa
\\&\Longleftrightarrow& 
b n > i-qa 
\\&\Longleftrightarrow&
b n > (m-q)a+nb
\\&\Longleftrightarrow&
(m-q)a <0 
\\&\Longleftrightarrow&
m < q
\end{eqnarray*}

\clearpage

\begin{eqnarray*}
i-qb\not\in\Lambda &\Longleftrightarrow&
b((i-qb) c\mod a)> i-qb
\\&\Longleftrightarrow&
b((ma+nb-qb) c\mod a)> i-qb
\\&\Longleftrightarrow& 
b((n-q)b c\mod a)> i-qb
\\&\Longleftrightarrow& 
b((n-q)\mod a)> i-qb 
\\&\Longleftrightarrow&
b((n-q)\mod a)> ma+(n-q)b
\\&\Longleftrightarrow&
b[((n-q)\mod a)-(n-q)]> ma 
\\&\Longleftrightarrow&
b\left(-\left\lfloor\frac{n-q}{a}\right\rfloor a\right)> ma
\\&\Longleftrightarrow&
b\left(\left\lceil-\frac{n-q}{a}\right\rceil \right)> m
\\&\Longleftrightarrow&
b\left\lceil\frac{q-n}{a}\right\rceil> m
\end{eqnarray*}

Consequently, 
the Geil-Matsumoto bound is
\begin{equation*}
1+\sum_{n=0}^{a-1}\min\left(q,\left\lceil\frac{q-n}{a}\right\rceil\cdot b\right)\end{equation*}
Now some technical steps lead to the next formula.
\begin{equation}
\label{6:gm2}
GM_q(\langle a,b\rangle)=\left\{\begin{array}{ll}
1+qa &\mbox{ if }q\leqslant \lceil\frac{q-a+1}{a}\rceil b\\
1+(q\mod a)q+(a-(q\mod a))\lceil\frac{q-a+1}{a}\rceil b &\mbox{ if } 
\lceil\frac{q-a+1}{a}\rceil b
<q\leqslant\lceil\frac{q}{a}\rceil b\\
1+ab\lceil\frac{q}{a}\rceil -(a-(q\mod a))b &\mbox{ if } q>\lceil\frac{q}{a}\rceil b\\
\end{array}\right.
\end{equation}
Since 
$\lceil\frac{q-a+1}{a}\rceil$ 
is the unique integer between 
$\frac{q-a+1}{a}$ and 
$\frac{q}{a}$,
one has 
$\lceil\frac{q-a+1}{a}\rceil=\lfloor\frac{q}{a}\rfloor$, and 
the formula in (\ref{6:gm2}) coincides with that in (\ref{6:gm1}).
\end{proof}

\section{Coincidences of Lewittes's and the Geil-Matsumoto bound}
\label{6:sec:L-GM}

We are interested now in the coincidences of Lewittes's and the Geil-Matsumoto bound.
To get an idea, one can see in Table~\ref{t:LGM} the portion of semigroups for which they coincide for several values of the genus and the field size. 

Beelen and Ruano proved in \cite[Proposition 9]{6:BeelenRuano} that if $q\in\Lambda$ then the bounds coincide. 
For the case of two generators, from equation (\ref{6:gm0}) we deduce that 
$GM_q(\langle a,b\rangle)=L_q(\langle a,b\rangle)$ if and only if 
$q\leqslant \lfloor\frac{q}{a}\rfloor b$. 
Otherwise, the Geil-Matsumoto bound always gives an improvement with respect to Lewittes's bound.
We want to generalize these results to semigroups with any number of generators.

\begin{theorem}
\label{6:t:coincidences}
Let $\Lambda=\langle \lambda_1,\dots,\lambda_n\rangle$ with $\lambda_1<\lambda_i$ for all $i>1$.
The next statements are equivalent
\begin{enumerate}
\item
$GM_q(\Lambda)=L_q(\Lambda)$,
\item
$\Lambda\setminus\cup_{i=1}^n(q\lambda_i+\Lambda)=\Lambda\setminus(q\lambda_1+\Lambda)$,
\item
$q(\lambda_i-\lambda_1)\in\Lambda$ for all $i>1$.
\end{enumerate}
\end{theorem}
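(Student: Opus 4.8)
The plan is to establish the two equivalences (1)$\Leftrightarrow$(2) and (2)$\Leftrightarrow$(3) separately, each of which reduces to elementary bookkeeping once the right objects are isolated.

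For (1)$\Leftrightarrow$(2) I would argue by cardinalities. By (\ref{6:gm}) we have $GM_q(\Lambda)=\#\bigl(\Lambda\setminus\bigcup_{i=1}^n(q\lambda_i+\Lambda)\bigr)+1$, while Lemma~\ref{6:l:hlpgm} gives $\#(\Lambda\setminus(q\lambda_1+\Lambda))=q\lambda_1$, so that $L_q(\Lambda)=q\lambda_1+1=\#(\Lambda\setminus(q\lambda_1+\Lambda))+1$. Since the index $i=1$ appears in the union, $q\lambda_1+\Lambda\subseteq\bigcup_{i=1}^n(q\lambda_i+\Lambda)$, hence $\Lambda\setminus\bigcup_{i=1}^n(q\lambda_i+\Lambda)\subseteq\Lambda\setminus(q\lambda_1+\Lambda)$. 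Both sets are finite — the larger one has exactly $q\lambda_1$ elements — and a finite set contained in another of the same cardinality must coincide with it. Therefore the two cardinalities are equal, i.e.\ $GM_q(\Lambda)=L_q(\Lambda)$, if and only if the two sets are equal, which is precisely statement (2).

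For (2)$\Leftrightarrow$(3) I would first observe that $q\lambda_i\in\Lambda$ and $\Lambda$ is closed under addition, so $q\lambda_i+\Lambda\subseteq\Lambda$ for every $i$; hence statement (2) is equivalent to $\bigcup_{i=1}^n(q\lambda_i+\Lambda)=q\lambda_1+\Lambda$, and since the right-hand side is one of the sets in the union, this is in turn equivalent to the inclusions $q\lambda_i+\Lambda\subseteq q\lambda_1+\Lambda$ for all $i>1$. Now $q\lambda_i+\Lambda\subseteq q\lambda_1+\Lambda$ says exactly that $q(\lambda_i-\lambda_1)+\mu\in\Lambda$ for every $\mu\in\Lambda$. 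If $q(\lambda_i-\lambda_1)\in\Lambda$, closure under addition gives this for all $\mu\in\Lambda$; conversely, taking $\mu=0\in\Lambda$ yields $q(\lambda_i-\lambda_1)\in\Lambda$. Running this over all $i>1$ proves (2)$\Leftrightarrow$(3), and combining with the previous paragraph closes the chain.

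I do not expect a real obstacle. The only points needing care are the finiteness-plus-inclusion step that upgrades the equality of cardinalities to an equality of sets in the first equivalence, and keeping in mind that the complements in (2) are taken inside $\Lambda$, so that the inclusions $q\lambda_i+\Lambda\subseteq\Lambda$ legitimately reduce (2) to a plain union equality in the second equivalence. One could alternatively package everything as a cyclic implication (1)$\Rightarrow$(2)$\Rightarrow$(3)$\Rightarrow$(1), but the two-equivalence presentation above seems the most transparent.
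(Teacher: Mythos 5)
Your proposal is correct and follows essentially the same route as the paper's proof: the equivalence (1)$\Leftrightarrow$(2) via Lemma~\ref{6:l:hlpgm}, the inclusion $\Lambda\setminus\cup_{i=1}^n(q\lambda_i+\Lambda)\subseteq\Lambda\setminus(q\lambda_1+\Lambda)$, and the cardinality argument, followed by the chain of equivalences reducing (2) to the inclusions $q\lambda_i+\Lambda\subseteq q\lambda_1+\Lambda$ and hence to (3). The only difference is presentational (you spell out the finiteness step and the $\mu=0$ specialization explicitly), so nothing further is needed.
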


\begin{proof}
By Lemma~\ref{6:l:hlpgm} it is obvious that 2 implies 1.
The converse follows from the inclusion
$\Lambda\setminus\cup_{i=1}^n(q\lambda_i+\Lambda)\subseteq\Lambda\setminus(q\lambda_1+\Lambda)$ and the equality 
$GM_q(\Lambda)=L_q(\Lambda)$ which, by Lemma~\ref{6:l:hlpgm}, implies that
$\#(\Lambda\setminus\cup_{i=1}^n(q\lambda_i+\Lambda))=\#(\Lambda\setminus(q\lambda_1+\Lambda))$.

For the equivalence of the last two statements notice that
$q(\lambda_i-\lambda_1)\in\Lambda$ for all $i>1$
$\Longleftrightarrow$
$q\lambda_i\in q\lambda_1+\Lambda$ for all $i>1$
$\Longleftrightarrow$
$q\lambda_i+\Lambda\subseteq q\lambda_1+\Lambda$ for all $i>1$
$\Longleftrightarrow$
$\Lambda\setminus\cup_{i=1}^n(q\lambda_i+\Lambda)=\Lambda\setminus(q\lambda_1+\Lambda)$.
\end{proof}

Notice that under the hypothesis $q\in\Lambda$ then $q(\lambda-\lambda_1)\in\Lambda$ is satisfied by all $\lambda\in\Lambda$. So, Theorem~\ref{6:t:coincidences} generalizes Beelen-Ruano's result.

Theorem~\ref{6:t:coincidences}
suggests to analyze under what conditions 
$q(\lambda_i-\lambda_1)\in\Lambda$ for some $i>1$.
Let us first see in what cases 
$q(\lambda_i-\lambda_1)\in\{x\lambda_1+y\lambda_i:x,y\in{\mathbb N}_0\}$.
Notice that if $\gcd(\lambda_1,\lambda_i)=d$ then  
$\{x\lambda_1+y\lambda_i:x,y\in{\mathbb N}_0\}=d\langle\frac{\lambda_1}{d},\frac{\lambda_i}{d}\rangle$,
where by 
$d\langle\frac{\lambda_1}{d},\frac{\lambda_i}{d}\rangle$ we mean the set
$\{d\lambda:\lambda\in \langle\frac{\lambda_1}{d},\frac{\lambda_i}{d}\rangle\}$.
Obviously, 
$d\langle\frac{\lambda_1}{d},\frac{\lambda_i}{d}\rangle\subseteq\Lambda$.

\begin{lemma}
\label{6:l:qd}
Let $\gcd(\lambda_1,\lambda_i)=d$. Then  
$q(\lambda_i-\lambda_1)\in d\langle\frac{\lambda_1}{d},\frac{\lambda_i}{d}\rangle$ if and only if
$qd\leqslant\lfloor \frac{qd}{\lambda_1}\rfloor\lambda_i$.
In particular, if
$q\leqslant\lfloor \frac{q}{\lambda_1}\rfloor\lambda_i$
then $q(\lambda_i-\lambda_1)\in d\langle\frac{\lambda_1}{d},\frac{\lambda_i}{d}\rangle$.
\end{lemma}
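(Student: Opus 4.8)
The plan is to reduce the membership question $q(\lambda_i-\lambda_1)\in d\langle\frac{\lambda_1}{d},\frac{\lambda_i}{d}\rangle$ to a membership question in the semigroup $\langle\frac{\lambda_1}{d},\frac{\lambda_i}{d}\rangle$ generated by two \emph{coprime} integers, so that part~2 of Lemma~\ref{6:lemma:membership} applies. Write $a=\frac{\lambda_1}{d}$ and $b=\frac{\lambda_i}{d}$, so $\gcd(a,b)=1$ and $a<b$. Since $q(\lambda_i-\lambda_1)=qd(b-a)$, we have $q(\lambda_i-\lambda_1)\in d\langle a,b\rangle$ if and only if $q(b-a)\in\langle a,b\rangle$. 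So the whole statement is really: $q(b-a)\in\langle a,b\rangle$ if and only if $qd\leqslant\lfloor\frac{qd}{\lambda_1}\rfloor\lambda_i$, i.e. (after dividing through, since $\lambda_1=da$ and $\lambda_i=db$) if and only if $q\leqslant\lfloor\frac{q}{a}\rfloor b$. Note that $qd\leqslant\lfloor\frac{qd}{\lambda_1}\rfloor\lambda_i$ reads $qd\leqslant\lfloor\frac{q}{a}\rfloor\,db$, which is exactly $q\leqslant\lfloor\frac{q}{a}\rfloor b$; so there is nothing to check between these two forms.

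Now I would apply Lemma~\ref{6:lemma:membership}(2) to $i=q(b-a)$ with the pair $(a,b)$: letting $c$ be the inverse of $b$ modulo $a$, membership holds iff $b\,(q(b-a)c\bmod a)\leqslant q(b-a)$. The key simplification is that $q(b-a)c\equiv qbc\equiv q\pmod a$, since $a\mid ac$ and $bc\equiv 1$. Hence $q(b-a)c\bmod a = q\bmod a$. Writing $q=\lfloor\frac{q}{a}\rfloor a + (q\bmod a)$, the membership condition $b\,(q\bmod a)\leqslant q(b-a)$ becomes, after substituting and rearranging, $b\,(q\bmod a)\leqslant qb-qa$, i.e. $qa\leqslant qb - b(q\bmod a)=b(q-(q\bmod a))=b\lfloor\frac{q}{a}\rfloor a$, which divides out to $q\leqslant\lfloor\frac{q}{a}\rfloor b$. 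That is precisely the claimed inequality, so the ``if and only if'' is established.

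For the ``in particular'' clause: the hypothesis $q\leqslant\lfloor\frac{q}{\lambda_1}\rfloor\lambda_i$ is the special case $d=1$ of the inequality just characterized, and one checks it implies the general-$d$ inequality $q\leqslant\lfloor\frac{q}{a}\rfloor b$. Indeed $\lfloor\frac{q}{\lambda_1}\rfloor=\lfloor\frac{q}{da}\rfloor\leqslant\lfloor\frac{q}{a}\rfloor\cdot\frac1d\cdot d$... more cleanly, from $\lfloor\frac{q}{da}\rfloor d\leqslant\lfloor\frac{q}{a}\rfloor$ (a standard floor inequality, since $\lfloor\frac{q}{da}\rfloor d\cdot a\leqslant q$ forces $\lfloor\frac{q}{da}\rfloor d\leqslant\lfloor\frac{q}{a}\rfloor$) we get $\lfloor\frac{q}{\lambda_1}\rfloor\lambda_i=\lfloor\frac{q}{da}\rfloor db\leqslant\lfloor\frac{q}{a}\rfloor b$, so $q\leqslant\lfloor\frac{q}{\lambda_1}\rfloor\lambda_i\leqslant\lfloor\frac{q}{a}\rfloor b$ and membership follows from the main equivalence. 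I expect the only mildly delicate point to be the bookkeeping with the two floors (the passage between $\lfloor\frac{q}{da}\rfloor$ and $\lfloor\frac{q}{a}\rfloor$); everything else is a direct unwinding via Lemma~\ref{6:lemma:membership}.
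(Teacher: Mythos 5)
Your proof is correct and follows essentially the same route as the paper's: reduce to the coprime pair $(\frac{\lambda_1}{d},\frac{\lambda_i}{d})$, apply part~2 of Lemma~\ref{6:lemma:membership} to $q(b-a)$ using $q(b-a)c\equiv q\pmod a$, and derive the ``in particular'' clause from the floor inequality $\lfloor\frac{q}{da}\rfloor d\leqslant\lfloor\frac{q}{a}\rfloor$. The only difference is cosmetic: you rewrite the condition as $q\leqslant\lfloor\frac{q}{a}\rfloor b$ before comparing, while the paper keeps it in the form $qd\leqslant\lfloor\frac{qd}{\lambda_1}\rfloor\lambda_i$.
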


\begin{proof}
We need to prove that
$q(\frac{\lambda_i}{d}-\frac{\lambda_1}{d})\in \langle\frac{\lambda_1}{d},\frac{\lambda_i}{d}\rangle$ if and only if
$qd\leqslant\lfloor \frac{qd}{\lambda_1}\rfloor\lambda_i$.
Suppose that $c$ is the inverse of $\frac{\lambda_i}{d}$ modulo $\frac{\lambda_1}{d}$.
By Lemma~\ref{6:lemma:membership},
$q(\frac{\lambda_i}{d}-\frac{\lambda_1}{d})
\in \langle\frac{\lambda_1}{d},\frac{\lambda_i}{d}\rangle$ if and only if
$\frac{\lambda_i}{d}(q(\frac{\lambda_i}{d}-\frac{\lambda_1}{d}) c\mod\frac{\lambda_1}{d})\leqslant q(\frac{\lambda_i}{d}-\frac{\lambda_1}{d})$, that is, 
$\frac{\lambda_i}{d}(q\mod\frac{\lambda_1}{d})\leqslant q(\frac{\lambda_i}{d}-\frac{\lambda_1}{d})$ which is equivalent to 
$qd\leqslant\lfloor \frac{qd}{\lambda_1}\rfloor\lambda_i$.

Now, if $q\leqslant\lfloor \frac{q}{\lambda_1}\rfloor\lambda_i$, then  
$qd\leqslant \lfloor\frac{q}{\lambda_1}\rfloor d\lambda_i
\leqslant \lfloor\frac{qd}{\lambda_1}\rfloor \lambda_i$ and the last statement follows.
\end{proof}

\begin{proposition}
\label{p:coincLGM}
Suppose $\lambda_1< \lambda_2< \dots< \lambda_n$ and let
$\Lambda=\langle \lambda_1, \lambda_2, \dots, \lambda_n\rangle$.
If $q\leqslant \lfloor\frac{q}{\lambda_1}\rfloor \lambda_2$ 
then $GM_q(\Lambda)=L_q(\Lambda)$.
\end{proposition}

\begin{proof}
By hypothesis, 
$q\leqslant \lfloor\frac{q}{\lambda_1}\rfloor \lambda_i$
for all $i>1$.
By Lemma~\ref{6:l:qd},
$q(\lambda_i-\lambda_1)\in \Lambda$ for all $i>1$ and by Theorem~\ref{6:t:coincidences},
$GM_q(\Lambda)=L_q(\Lambda)$.
\end{proof}

\begin{remark}
As mentioned, the converse is true when restricted to semigroups with two generators. Otherwise the converse is not true in general. 
For instance, consider $\Lambda=\langle 5,7,18\rangle $ with $q=9$.
We have $\Lambda=\{0, 5, 7, 10, 12, 14, 15, 17, 18, \dots\}$ and 
%
%
$\Lambda\setminus\cup_{\lambda_i\mbox{ generator of }\Lambda}(q\lambda_i+\Lambda)=$
$
\{0, 5, 7, 10, 12, 14, 15, 17, 18, 19, 20,$
$
21, 22, 23,
$ $24, 25, 26, 27, 28, 29, 30, 31, 32, 33, 34, 35, 36, 37, 38, 39, 40, 41, 42, 43,$
$
44, 46, 47, 48,$ $49, 51, 53, 54, 56, 58, 61\}=\Lambda\setminus(q\lambda_1+\Lambda).$
%
So $GM_q(\langle 5,7,18\rangle)=L_q(\langle 5,7,18\rangle)=46$.
However, $q (=9)>\lfloor\frac{q}{\lambda_1}\rfloor \lambda_2(=7)$.
The reason is that although $q(\lambda_2-\lambda_1)\not\in\langle \lambda_1,\lambda_2\rangle$, it holds that $q(\lambda_2-\lambda_1)\in\langle \lambda_1,\lambda_2,\lambda_3\rangle=\Lambda$.
\end{remark}

In Table~\ref{t:LGM}, together with the portion of semigroups for which 
the Lewittes and the Geil-Matsumoto bounds coincide,
we give the portion of semigroups satisfying the hypothesis in Proposition~\ref{p:coincLGM}.
From that table it is easy to check again that in general the converse of 
Proposition~\ref{p:coincLGM} is not true.

\begin{table}
\begin{center}
\begin{tabular}{|| c || c|c|c|c|c || c|c|c|c|c ||}
\hline
&\multicolumn{5}{c||}{Lewittes  = Geil-Matsumoto
}&\multicolumn{5}{c||}{$q\leqslant \lfloor\frac{q}{\lambda_1}\rfloor
\lambda_2$}\\\hline
Genus & q=2 & q=3 & q=9 & q=16 & q=256 & q=2 & q=3 & q=9 & q=16 & q=256\\\hline
2 & 50.00$\%$ & 100$\%$ & 100$\%$ & 100$\%$ & 100$\%$ & 50.00$\%$ & 100$\%$ & 100$\%$ & 100$\%$ & 100$\%$ \\ 
3 & 25.00$\%$ & 75.00$\%$ & 100$\%$ & 100$\%$ & 100$\%$ & 25.00$\%$ & 75.00$\%$ & 100$\%$ & 100$\%$ & 100$\%$ \\ 
4 & 42.86$\%$ & 57.14$\%$ & 100$\%$ & 100$\%$ & 100$\%$ & 14.29$\%$ & 42.86$\%$ & 85.71$\%$ & 100$\%$ & 100$\%$ \\ 
5 & 33.33$\%$ & 41.67$\%$ & 91.67$\%$ & 100$\%$ & 100$\%$ & 8.33$\%$ & 25.00$\%$ & 58.33$\%$ & 91.67$\%$ & 100$\%$ \\ 
6 & 21.74$\%$ & 43.48$\%$ & 86.96$\%$ & 100$\%$ & 100$\%$ & 4.35$\%$ & 17.39$\%$ & 43.48$\%$ & 82.61$\%$ & 100$\%$ \\ 
7 & 17.95$\%$ & 41.03$\%$ & 87.18$\%$ & 100$\%$ & 100$\%$ &  2.56$\%$ & 10.26$\%$ & 38.46$\%$ & 84.62$\%$ & 100$\%$ \\ 
8 & 14.93$\%$ & 37.31$\%$ & 85.07$\%$ & 100$\%$ & 100$\%$ & 1.49$\%$ & 5.97$\%$ & 53.73$\%$ & 91.04$\%$ & 100$\%$ \\ 
9 & 11.02$\%$ & 33.05$\%$ & 88.14$\%$ & 98.31$\%$ & 100$\%$ & 0.85$\%$ & 4.24$\%$ & 72.03$\%$ & 87.29$\%$ & 100$\%$ \\ 
10 & 8.82$\%$ & 29.90$\%$ & 88.24$\%$ & 95.59$\%$ & 100$\%$ & 0.49$\%$ & 2.45$\%$ & 79.90$\%$ & 78.92$\%$ & 100$\%$ \\ 
11 & 7.58$\%$ & 25.95$\%$ & 84.55$\%$ & 92.71$\%$ & 100$\%$ & 0.29$\%$ & 1.46$\%$ & 78.13$\%$ & 65.89$\%$ & 100$\%$ \\ 
12 & 6.59$\%$ & 23.48$\%$ & 78.89$\%$ & 90.88$\%$ & 100$\%$ & 0.17$\%$ & 1.01$\%$ & 69.93$\%$ & 54.05$\%$ & 100$\%$ \\ 
13 & 5.69$\%$ & 21.48$\%$ & 73.73$\%$ & 89.81$\%$ & 100$\%$ & 0.10$\%$ & 0.60$\%$ & 59.64$\%$ & 42.76$\%$ & 100$\%$ \\ 
14 & 5.02$\%$ & 18.90$\%$ & 69.76$\%$ & 88.66$\%$ & 100$\%$ & 0.06$\%$ & 0.35$\%$ & 49.26$\%$ & 33.73$\%$ & 100$\%$ \\ 
15 & 4.10$\%$ & 16.63$\%$ & 66.26$\%$ & 87.68$\%$ & 100$\%$ & 0.04$\%$ & 0.25$\%$ & 39.38$\%$ & 28.35$\%$ & 100$\%$ \\ 
16 & 3.45$\%$ & 14.77$\%$ & 63.23$\%$ & 87.22$\%$ & 100$\%$ & 0.02$\%$ & 0.15$\%$ & 30.86$\%$ & 28.67$\%$ & 100$\%$ \\ 
17 & 2.92$\%$ & 13.10$\%$ & 60.66$\%$ & 87.00$\%$ & 100$\%$ & 0.01$\%$ & 0.09$\%$ & 23.79$\%$ & 35.23$\%$ & 100$\%$ \\ 
18 & 2.38$\%$ & 11.66$\%$ & 58.74$\%$ & 87.03$\%$ & 100$\%$ & 0.01$\%$ & 0.06$\%$ & 18.33$\%$ & 45.70$\%$ & 100$\%$ \\ 
19 & 1.93$\%$ & 10.40$\%$ & 57.06$\%$ & 86.71$\%$ & 100$\%$ & 0.00$\%$ & 0.04$\%$ & 13.93$\%$ & 55.89$\%$ & 100$\%$ \\ 
20 & 1.60$\%$ & 9.28$\%$ & 55.71$\%$ & 85.43$\%$ & 100$\%$ & 0.00$\%$ & 0.02$\%$ & 10.55$\%$ & 62.47$\%$ & 99.95$\%$ \\ 
21 & 1.31$\%$ & 8.34$\%$ & 54.67$\%$ & 83.03$\%$ & 100$\%$ & 0.00$\%$ & 0.01$\%$ & 7.93$\%$ & 64.51$\%$ & 99.75$\%$ \\ 
22 & 1.09$\%$ & 7.48$\%$ & 53.95$\%$ & 80.14$\%$ & 100$\%$ & 0.00$\%$ & 0.01$\%$ & 5.93$\%$ & 62.93$\%$ & 99.19$\%$ \\ 
23 & 0.90$\%$ & 6.70$\%$ & 53.29$\%$ & 77.41$\%$ & 100$\%$ & 0.00$\%$ & 0.01$\%$ & 4.39$\%$ & 59.00$\%$ & 98.09$\%$ \\ 
24 & 0.75$\%$ & 6.02$\%$ & 52.46$\%$ & 75.16$\%$ & 100$\%$ & 0.00$\%$ & 0.00$\%$ & 3.25$\%$ & 53.67$\%$ & 96.50$\%$ \\ 
25 & 0.63$\%$ & 5.42$\%$ & 51.33$\%$ & 73.37$\%$ & 100$\%$ & 0.00$\%$ & 0.00$\%$ & 2.38$\%$ & 47.63$\%$ & 94.73$\%$ \\ 
26 & 0.53$\%$ & 4.90$\%$ & 49.94$\%$ & 71.94$\%$ & 100$\%$ & 0.00$\%$ & 0.00$\%$ & 1.74$\%$ & 41.35$\%$ & 93.12$\%$ \\ 
27 & 0.45$\%$ & 4.45$\%$ & 48.39$\%$ & 70.75$\%$ & 100$\%$ & 0.00$\%$ & 0.00$\%$ & 1.27$\%$ & 35.24$\%$ & 91.84$\%$ \\ 
28 & 0.38$\%$ & 4.07$\%$ & 46.81$\%$ & 69.73$\%$ & 100$\%$ & 0.00$\%$ & 0.00$\%$ & 0.92$\%$ & 29.58$\%$ & 90.87$\%$ \\ 
29 & 0.32$\%$ & 3.74$\%$ & 45.25$\%$ & 68.76$\%$ & 100$\%$ & 0.00$\%$ & 0.00$\%$ & 0.67$\%$ & 24.52$\%$ & 90.06$\%$ \\ 
30 & 0.27$\%$ & 3.44$\%$ & 43.76$\%$ & 67.80$\%$ & 100$\%$ & 0.00$\%$ & 0.00$\%$ & 0.48$\%$ & 20.12$\%$ & 89.25$\%$ \\
\hline 
\end{tabular}

\caption{Portion of semigroups for which the Lewittes and the Geil-Matsumoto bounds coincide and portion of semigroups satisfying the hypothesis in Proposition~\ref{p:coincLGM}, that is $q\leqslant \lfloor\frac{q}{\lambda_1}\rfloor \lambda_2$, where $\lambda_1,\lambda_2$ are the first and second smallest generators.}
\label{t:LGM}
\end{center}
\end{table}

\section{Simplifying the computation}
\label{6:sec:simplify} 

Next we investigate in which cases the computation of
$\Lambda\setminus\cup_{\lambda_i\mbox{ generator of }\Lambda}(q\lambda_i+\Lambda)$
can be simplified to
the computation of 
$\Lambda\setminus\cup_{i\in I}(q\lambda_i+\Lambda)$ 
for some index set $I$ 
smaller than the number of generators of $\Lambda$.
The next proposition can be proved very similarly as we proved Theorem~\ref{6:t:coincidences}.

\begin{proposition}
\label{6:p:coincidencesI}
Let $\Lambda=\langle \lambda_1,\dots,\lambda_n\rangle$ and let $I$ be an index set included in  $\{1, \dots, n\}$.
The next statements are equivalent.
\begin{enumerate}
\item
$\Lambda\setminus\cup_{i=1}^n(q\lambda_i+\Lambda)=\Lambda\setminus\cup_{i\in I}(q\lambda_i+\Lambda)$.
\item
For all $i\not\in I$ there exists 
$1\leqslant j\leqslant n$, $j\in I$ such that $q(\lambda_i-\lambda_j)\in\Lambda$.
\end{enumerate}
\end{proposition}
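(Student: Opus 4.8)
The plan is to mimic the structure of the proof of Theorem~\ref{6:t:coincidences}, simply replacing the distinguished index $1$ by the whole index set $I$. First I would reduce statement~1 to a set-theoretic equality using the obvious inclusion: since $I\subseteq\{1,\dots,n\}$, we always have $\bigcup_{i\in I}(q\lambda_i+\Lambda)\subseteq\bigcup_{i=1}^n(q\lambda_i+\Lambda)$, hence $\Lambda\setminus\bigcup_{i=1}^n(q\lambda_i+\Lambda)\subseteq\Lambda\setminus\bigcup_{i\in I}(q\lambda_i+\Lambda)$. So statement~1 is equivalent to the reverse inclusion, which in turn is equivalent to $\bigcup_{i=1}^n(q\lambda_i+\Lambda)\subseteq\bigcup_{i\in I}(q\lambda_i+\Lambda)$, i.e. to the condition that $q\lambda_i+\Lambda\subseteq\bigcup_{j\in I}(q\lambda_j+\Lambda)$ for every $i\notin I$ (the indices in $I$ being trivially contained).

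Next I would translate the containment $q\lambda_i+\Lambda\subseteq\bigcup_{j\in I}(q\lambda_j+\Lambda)$ into statement~2. The key elementary observation is that $q\lambda_i+\Lambda\subseteq q\lambda_j+\Lambda$ if and only if $q\lambda_i\in q\lambda_j+\Lambda$ if and only if $q(\lambda_i-\lambda_j)\in\Lambda$; this uses only that $\Lambda$ is closed under addition together with the minimality of $\lambda_j$ among generators (so that $q(\lambda_i-\lambda_j)$, when it is a nonnegative integer lying in $\Lambda$, really does yield the inclusion — note $\lambda_i>\lambda_j$ is not assumed, but if $q(\lambda_i-\lambda_j)\in\Lambda$ then in particular $\lambda_i\geqslant\lambda_j$ or the difference is a nonpositive non-gap, forcing $\lambda_i=\lambda_j$). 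The one direction (2$\Rightarrow$1) is then immediate: given $i\notin I$, pick $j\in I$ with $q(\lambda_i-\lambda_j)\in\Lambda$, so $q\lambda_i+\Lambda\subseteq q\lambda_j+\Lambda\subseteq\bigcup_{j\in I}(q\lambda_j+\Lambda)$, and summing over $i\notin I$ gives the required inclusion, hence the equality.

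For the converse (1$\Rightarrow$2), suppose the equality holds, fix $i\notin I$, and consider the element $q\lambda_i\in q\lambda_i+\Lambda\subseteq\bigcup_{j=1}^n(q\lambda_j+\Lambda)=\bigcup_{j\in I}(q\lambda_j+\Lambda)$ (using statement~1, recast as in the first paragraph). Then $q\lambda_i\in q\lambda_j+\Lambda$ for some $j\in I$, i.e. $q(\lambda_i-\lambda_j)\in\Lambda$, which is exactly statement~2. This is where a small amount of care is needed and is the only real obstacle: one must confirm that membership of the single element $q\lambda_i$ in $\bigcup_{j\in I}(q\lambda_j+\Lambda)$ is genuinely equivalent to the set-theoretic containment of the whole coset $q\lambda_i+\Lambda$ — which it is, precisely because each $q\lambda_j+\Lambda$ is closed under adding elements of $\Lambda$, so $q\lambda_i\in q\lambda_j+\Lambda$ propagates to $q\lambda_i+\Lambda\subseteq q\lambda_j+\Lambda$. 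I would write this out as a short chain of equivalences exactly parallel to the last paragraph of the proof of Theorem~\ref{6:t:coincidences}, and remark that the case $I=\{1\}$ recovers that theorem's equivalence of statements 2 and 3.
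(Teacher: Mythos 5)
Your proof is correct and follows exactly the route the paper intends: the paper omits the argument, saying only that it is ``very similar'' to the proof of Theorem~\ref{6:t:coincidences}, and your write-up is precisely that adaptation, including the one genuinely new point (that membership of the single element $q\lambda_i$ in the union $\cup_{j\in I}(q\lambda_j+\Lambda)$ pins down a single $j\in I$ and then propagates to containment of the whole coset $q\lambda_i+\Lambda$). Your parenthetical appeal to ``minimality of $\lambda_j$'' is unnecessary --- the equivalence $q\lambda_i\in q\lambda_j+\Lambda\Leftrightarrow q(\lambda_i-\lambda_j)\in\Lambda$ holds with no ordering assumption, since $\Lambda\subseteq{\mathbb N}_0$ makes the condition false whenever the difference is negative --- but this does not affect correctness.
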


One consequence of
Proposition~\ref{6:p:coincidencesI} is the next proposition.

\begin{proposition}
\label{6:p:notots}
Let $\Lambda=\langle \lambda_1,\dots,\lambda_n\rangle$ with $\lambda_1<\lambda_2<\dots<\lambda_n$ and $\lambda_1<q$.
\begin{enumerate}
\item 
Let $\lambda_j$ be the maximum generator strictly smaller than $\frac{q}{\lfloor\frac{q}{\lambda_1}\rfloor}$ then\break
$\Lambda\setminus\cup_{i=1}^n(q\lambda_i+\Lambda)=\Lambda\setminus\cup_{i=1}^j(q\lambda_i+\Lambda)$.
\item
Let $\lambda_j$ be the maximum generator strictly smaller than $2\lambda_1-1$ then\break
$\Lambda\setminus\cup_{i=1}^n(q\lambda_i+\Lambda)=\Lambda\setminus\cup_{i=1}^j(q\lambda_i+\Lambda)$.
\end{enumerate}
\end{proposition}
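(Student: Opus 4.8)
The plan is to derive Proposition~\ref{6:p:notots} as a direct consequence of Proposition~\ref{6:p:coincidencesI} by taking $I=\{1,\dots,j\}$ and checking, in each of the two cases, that condition~2 of that proposition holds, i.e.\ that for every generator $\lambda_i$ with $i>j$ there is some $\lambda_\ell$ with $\ell\leqslant j$ (in fact $\ell=1$ will do) such that $q(\lambda_i-\lambda_\ell)\in\Lambda$. The tool for certifying membership in $\Lambda$ is Lemma~\ref{6:l:qd}: for any generator $\lambda_i$, since $\gcd(\lambda_1,\lambda_i)=d$ divides $\lambda_i-\lambda_1$, we have $q(\lambda_i-\lambda_1)\in d\langle\frac{\lambda_1}{d},\frac{\lambda_i}{d}\rangle\subseteq\Lambda$ as soon as $qd\leqslant\lfloor\frac{qd}{\lambda_1}\rfloor\lambda_i$, and in particular as soon as $q\leqslant\lfloor\frac{q}{\lambda_1}\rfloor\lambda_i$.

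For part~1, I would argue as follows. Write $t=\lfloor\frac{q}{\lambda_1}\rfloor$ (which is $\geqslant 1$ because $\lambda_1<q$). If $\lambda_i$ is a generator with $\lambda_i\geqslant\frac{q}{t}$, i.e.\ $\lambda_i$ is not strictly smaller than $\frac{q}{\lfloor q/\lambda_1\rfloor}$ and hence has index $i>j$, then $t\lambda_i\geqslant q$, that is $q\leqslant\lfloor\frac{q}{\lambda_1}\rfloor\lambda_i$; by the ``in particular'' clause of Lemma~\ref{6:l:qd} this gives $q(\lambda_i-\lambda_1)\in\Lambda$. Thus condition~2 of Proposition~\ref{6:p:coincidencesI} holds with $j$ playing the role witnessed by index $1\in I$, and the equality of the two set-differences follows. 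The only mild subtlety is to make sure the definition of $\lambda_j$ as ``the maximum generator strictly smaller than $\frac{q}{\lfloor q/\lambda_1\rfloor}$'' does single out an initial segment $\{1,\dots,j\}$ of indices — which it does because the $\lambda_i$ are listed in increasing order and $\lambda_1<\frac{q}{\lfloor q/\lambda_1\rfloor}$ (equivalently $\lambda_1\lfloor\frac{q}{\lambda_1}\rfloor\leqslant q$, which is immediate), so $\lambda_1$ itself qualifies and $j\geqslant 1$.

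For part~2, the relevant threshold is $2\lambda_1-1$: if $\lambda_i$ is a generator with $\lambda_i\geqslant 2\lambda_1-1$, hence index $i>j$, I want again $q(\lambda_i-\lambda_1)\in\Lambda$. Here I would use Lemma~\ref{6:l:qd} in its sharper form $qd\leqslant\lfloor\frac{qd}{\lambda_1}\rfloor\lambda_i$ with $d=\gcd(\lambda_1,\lambda_i)$: since $\lambda_i\geqslant 2\lambda_1-1>\lambda_1$ and $d\mid\lambda_1$, one has $\lambda_i\geqslant 2\lambda_1-1\geqslant 2d\cdot\frac{\lambda_1}{d}-1$, and the point is that $\frac{\lambda_i}{d}\geqslant 2\frac{\lambda_1}{d}-1$ forces $\lfloor\frac{qd}{\lambda_1}\rfloor\lambda_i\geqslant qd$ for every $q$; concretely, writing $q=k\frac{\lambda_1}{d}+r$ with $0\leqslant r<\frac{\lambda_1}{d}$ (note $k\geqslant 1$ since $q>\lambda_1\geqslant d\cdot\frac{\lambda_1}{d}$... more carefully $q>\lambda_1$ gives $k\geqslant 1$ when $d<\lambda_1$, and the case $d=\lambda_1$, i.e.\ $\lambda_1\mid\lambda_i$, is immediate), one checks $k\lambda_i\geqslant k\bigl(2\frac{\lambda_1}{d}-1\bigr)\cdot d=2k\lambda_1-kd\geqslant q d$ using $kd\geqslant qd-k\lambda_1+\text{something}$ — this small inequality $k\lambda_i\geqslant qd$ is the one genuine computation, and it is where I expect to have to be careful with the floor and the boundary case $r=0$. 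Once $q(\lambda_i-\lambda_1)\in\Lambda$ is established for all $i>j$, Proposition~\ref{6:p:coincidencesI} with $I=\{1,\dots,j\}$ finishes the proof exactly as in part~1. The main obstacle, then, is purely the arithmetic verification in part~2 that $\lambda_i\geqslant 2\lambda_1-1$ together with $\gcd(\lambda_1,\lambda_i)=d$ yields $qd\leqslant\lfloor\frac{qd}{\lambda_1}\rfloor\lambda_i$ for all $q$; everything else is a routine application of the already-established Proposition~\ref{6:p:coincidencesId}{6:p:coincidencesI} and Lemma~\ref{6:l:qd}.
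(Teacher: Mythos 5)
Your reduction to Proposition~\ref{6:p:coincidencesI} with $I=\{1,\dots,j\}$ and witness index $1\in I$ is exactly the paper's strategy, and your part~1 is correct and essentially the paper's argument: a generator $\lambda_i$ with $i>j$ satisfies $\lambda_i\geqslant q/\lfloor q/\lambda_1\rfloor$, hence $q\leqslant\lfloor q/\lambda_1\rfloor\lambda_i$, and the ``in particular'' clause of Lemma~\ref{6:l:qd} gives $q(\lambda_i-\lambda_1)\in\Lambda$.

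In part~2, however, you leave the decisive step unproved: the inequality $k\lambda_i\geqslant qd$ is only gestured at (``$kd\geqslant qd-k\lambda_1+\text{something}$'') and you yourself flag it as the unresolved obstacle, so as written there is a genuine gap. The inequality is in fact true --- with $a=\lambda_1/d$ and $q=ka+r$, $0\leqslant r<a$, one has $qd=k\lambda_1+rd$ with $rd\leqslant\lambda_1-d\leqslant\lambda_1-1$ and $k\geqslant 1$, so $k\lambda_i\geqslant k(2\lambda_1-1)=k\lambda_1+k(\lambda_1-1)\geqslant k\lambda_1+rd=qd$ --- so your route can be completed. But the detour through $d=\gcd(\lambda_1,\lambda_i)$ and the sharper criterion of Lemma~\ref{6:l:qd} is unnecessary: the paper simply writes $q=x\lambda_1+y$ with $x=\lfloor q/\lambda_1\rfloor\geqslant 1$ and $0\leqslant y\leqslant\lambda_1-1$, so that $q/\lfloor q/\lambda_1\rfloor=\lambda_1+y/x\leqslant 2\lambda_1-1$; hence every generator $\geqslant 2\lambda_1-1$ is already $\geqslant q/\lfloor q/\lambda_1\rfloor$ and part~2 reduces to part~1 (equivalently, the weak criterion $q\leqslant\lfloor q/\lambda_1\rfloor\lambda_i$ already holds for such $\lambda_i$). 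A minor further point: your claimed equivalence ``$\lambda_1<q/\lfloor q/\lambda_1\rfloor\Leftrightarrow\lambda_1\lfloor q/\lambda_1\rfloor\leqslant q$'' needs a strict inequality on the right, so $j\geqslant 1$ can fail exactly when $\lambda_1\mid q$; this edge case is not addressed in the paper either.
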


\begin{proof}
The first statement is a consequence of Lemma~\ref{6:l:qd} together with Proposition~\ref{6:p:coincidencesI}.
For the second statement suppose that $q=x\lambda_1+y$ with $x,y$ integers and $x\geqslant 1$. Then 
$\frac{q}{\lfloor\frac{q}{\lambda_1}\rfloor}=\lambda_1+\frac{y}{x}$.
The result follows from the inequalities $x\geqslant 1$ and $y\leqslant \lambda_1-1$.
\end{proof}

We will call the generators that are strictly smaller than $2\lambda_1-1$ Geil-Matsumoto generators. What the last statement of the previous proposition says is that for computing the Geil-Matsumoto bound we only need to subtract from $\Lambda$
the sets $q\mu+\Lambda$ for $\mu$ a Geil-Matsumoto generator. Since in general we need to subtract these sets for {\em all} generators, this constitutes an improvement in terms of computation. In Table~\ref{t1},
we give the mean of the number of Geil-Matsumoto generators and non-Geil-Matsumoto generators per semigroup for different genera.
In Table~\ref{t2}, we give
the portion of Geil-Matsumoto generators (and non-Geil-Matsumoto generators) with respect to the total number of generators for different genera. We observe that, although the portion of non-Geil-Matsumoto generators decreases with the genus, it remains still significant, with a portion of more than 30$\%$ for genus 25.

\begin{table}
\begin{center}
\begin{tabular}{|c|c|c|} 
\hline 
&&\\ 
 Genus & 
\begin{tabular}{l}Mean of \\the number of \\GM generators \\per semigroup \end{tabular}& 
\begin{tabular}{l}Mean of \\the number of \\non-GM generators \\per semigroup \end{tabular}
\\
&&\\ 
\hline 
2&1.50&1.00\\ 
3&1.75&1.00\\ 
4&2.00&1.14\\ 
5&2.33&1.42\\ 
6&2.52&1.43\\ 
7&2.79&1.62\\ 
8&3.07&1.76\\ 
9&3.32&1.89\\ 
10&3.57&2.00\\ 
11&3.85&2.17\\ 
12&4.10&2.27\\ 
13&4.38&2.41\\ 
14&4.65&2.53\\ 
15&4.92&2.65\\ 
16&5.20&2.76\\ 
17&5.48&2.88\\ 
18&5.76&2.98\\ 
19&6.05&3.09\\ 
20&6.35&3.20\\ 
21&6.64&3.30\\ 
22&6.94&3.40\\ 
23&7.24&3.50\\ 
24&7.55&3.59\\ 
25&7.86&3.68\\ 
26&8.17&3.77\\ 
27&8.49&3.86\\ 
28&8.81&3.94\\ 
29&9.13&4.03\\ 
30&9.46&4.10\\
\hline
\end{tabular}
\caption{Mean of the number of Geil-Matsumoto generators and non-Geil-Matsumoto generators per semigroup}
\label{t1}
\end{center}
\end{table}

\begin{table}
\begin{center}
\begin{tabular}{|c|c|c|c|} 
\hline 
&&& \\ 
 Genus & 
\begin{tabular}{l}Total number of \\GM generators \\divided by the \\total number of \\generators\end{tabular} &
\begin{tabular}{l}Total number of \\non-GM generators \\divided by the \\total number of \\generators\end{tabular} &
\begin{tabular}{l}Mean of the \\portion of \\non-GM generators\\per semigroup\end{tabular}\\ 
&&&\\ 
\hline 
2&60.00$\%$ &40.00$\%$&41.67$\%$\\ 
3&63.64$\%$ &36.36$\%$&35.42$\%$\\ 
4&63.64$\%$ &36.36$\%$&38.57$\%$\\ 
5&62.22$\%$ &37.78$\%$&40.14$\%$\\ 
6&63.74$\%$ &36.26$\%$&37.43$\%$\\ 
7&63.37$\%$ &36.63$\%$&39.13$\%$\\ 
8&63.58$\%$ &36.42$\%$&39.03$\%$\\ 
9&63.74$\%$ &36.26$\%$&38.58$\%$\\ 
10&64.03$\%$ &35.97$\%$&38.39$\%$\\ 
11&63.96$\%$ &36.04$\%$&38.76$\%$\\ 
12&64.34$\%$ &35.66$\%$&38.26$\%$\\ 
13&64.54$\%$ &35.46$\%$&38.17$\%$\\ 
14&64.75$\%$ &35.25$\%$&37.99$\%$\\ 
15&65.01$\%$ &34.99$\%$&37.73$\%$\\ 
16&65.30$\%$ &34.70$\%$&37.45$\%$\\ 
17&65.56$\%$ &34.44$\%$&37.21$\%$\\ 
18&65.88$\%$ &34.12$\%$&36.87$\%$\\ 
19&66.19$\%$ &33.81$\%$&36.55$\%$\\ 
20&66.49$\%$ &33.51$\%$&36.25$\%$\\ 
21&66.79$\%$ &33.21$\%$&35.93$\%$\\ 
22&67.11$\%$ &32.89$\%$&35.59$\%$\\ 
23&67.43$\%$ &32.57$\%$&35.26$\%$\\ 
24&67.76$\%$ &32.24$\%$&34.91$\%$\\ 
25&68.08$\%$ &31.92$\%$&34.56$\%$\\ 
26&68.41$\%$ &31.59$\%$&34.21$\%$\\ 
27&68.74$\%$ &31.26$\%$&33.86$\%$\\ 
28&69.07$\%$ &30.93$\%$&33.50$\%$\\ 
29&69.40$\%$ &30.60$\%$&33.14$\%$\\ 
30&69.74$\%$ &30.26$\%$&32.77$\%$\\ 
\hline
\end{tabular}
\caption{Portion of Geil-Matsumoto generators}
\label{t2}
\end{center}
\end{table}

Proposition~\ref{6:p:notots} is a first consequence of 
Proposition~\ref{6:p:coincidencesI} and it
can be used to simplify the computation of the
Geil-Matsumoto bound.
We leave it as a problem for future research to find other consequences of 
Proposition~\ref{6:p:coincidencesI} to get further simplifications.

\section*{Acknowledgments}

The authors are grateful to Diego Ruano and also to the anonymous 
referees for many interesting comments.
This work was partly supported by
the Spanish Government through projects TIN2009-11689
``RIPUP'' and CONSOLIDER INGENIO 2010 CSD2007-00004 ``ARES'',
 and by the Government of Catalonia under grant 2009 SGR 1135.


\begin{thebibliography}{1}

\bibitem{6:BeelenRuano}
P. Beelen and D. Ruano.
\newblock Bounding the number of points on a curve using a generalization of Weierstrass semigroup.
\newblock {\em Designs, Codes and Cryptography}, Accepted, 2012.

\bibitem{6:GarciaRosales:intervals}
P.~A. Garc{\'{\i}}a-S{\'a}nchez and J.~C. Rosales.
\newblock Numerical semigroups generated by intervals.
\newblock {\em Pacific J. Math.}, 191(1):75--83, 1999.

\bibitem{6:Geil:normtrace}
Olav Geil.
\newblock On codes from norm-trace curves.
\newblock {\em Finite Fields Appl.}, 9(3):351--371, 2003.

\bibitem{6:GeilMatsumoto}
Olav Geil and Ryutaroh Matsumoto.
\newblock Bounding the number of {${\mathbb F}_q$}-rational places in algebraic
  function fields using {W}eierstrass semigroups.
\newblock {\em J. Pure Appl. Algebra}, 213(6):1152--1156, 2009.

\bibitem{6:HoLiPe}
Tom H{\o}holdt, Jacobus~H. van Lint, and Ruud Pellikaan.
\newblock Algebraic Geometry Codes.
\newblock In {\em Handbook of coding theory, {V}ol. {I}, {II}}, pages 871--961.
  North-Holland, Amsterdam, 1998.

\bibitem{6:KiPe}
Christoph Kirfel and Ruud Pellikaan.
\newblock The minimum distance of codes in an array coming from telescopic
  semigroups.
\newblock {\em IEEE Trans. Inform. Theory}, 41(6, part 1):1720--1732, 1995.
\newblock Special issue on algebraic geometry codes.

\bibitem{6:Lewittes}
Joseph Lewittes.
\newblock Places of degree one in function fields over finite fields.
\newblock {\em J. Pure Appl. Algebra}, 69(2):177--183, 1990.

\bibitem{6:RosalesGarciaSanchez}
J.~C. Rosales and P.~A. Garc{\'{\i}}a-S{\'a}nchez.
\newblock {\em Numerical semigroups}, volume~20 of {\em Developments in
  Mathematics}.
\newblock Springer, New York, 2009.

\bibitem{6:Stichtenoth}
Henning Stichtenoth.
\newblock {\em Algebraic function fields and codes}.
\newblock Universitext. Springer-Verlag, Berlin, 1993.

\end{thebibliography}

\end{document}